\DeclareMathOperator{\diag}{diag}
\DeclareMathOperator{\Span}{span}
\DeclareMathOperator{\sgn}{sgn}
\newcommand{\N}{\mathbb{N}}
\newcommand{\R}{\mathbb{R}}
\renewcommand{\Re}{\operatorname{Re}}
\renewcommand{\Vec}[1]{\renewcommand*{\arraystretch}{1.2}\begin{pmatrix*}[r]#1\end{pmatrix*}}
\let\epsilon\varepsilon
\let\phi\varphi
\let\rho\varrho
\newcommand{\dt}{\Delta t}
\newcommand{\by}{\mathbf y}
\newcommand{\bT}{\mathbf T}
\newcommand{\bA}{\mathbf A}
\newcommand{\bw}{\mathbf w}
\newcommand{\bg}{\mathbf g}
\newcommand{\bby}{\bar{\by}}
\newcommand{\bS}{\mathbf S}
\newcommand{\bG}{\mathbf G}
\newcommand{\be}{\mathbf e}
\newcommand{\bH}{\mathbf H}
\newcommand{\bD}{\mathbf D}
\newcommand{\bxi}{\mathbf \xi}
\newcommand{\bzero}{\bm 0}
\newcommand{\tm}{\subseteq}
\renewcommand{\prod}{p}
\newcommand{\from}{\colon}
\newcommand{\eq}{\text{EQ}}
\newcommand{\gl}{\text{GL}}
\begin{document}

\title*{A necessary condition for non oscillatory and positivity preserving time-integration schemes}
\titlerunning{Stability and oscillations of positive schemes}
% Use \titlerunning{Short Title} for an abbreviated version of
% your contribution title if the original one is too long
\author{Th.\ Izgin and P. \"Offner and D. Torlo}
% Use \authorrunning{Short Title for an abbreviated version of
% your contribution title if the original one is too long
\institute{Thomas Izgin \at Department of Mathematics, University of Kassel, Germany, \email{izgin@mathematik.uni-kassel.de}
\and Philipp \"Offfner \at Institute of Mathematics, Johannes Gutenberg University, Mainz, Germany \email{poeffner@uni-mainz.de}
\and Davide Torlo \at mathLab SISSA, SISSA International School of Advanced Studies, Trieste, Italy \email{davide.torlo@sissa.it}}
%
% Use the package "url.sty" to avoid
% problems with special characters
% used in your e-mail or web address
%
\maketitle

%\abstract*{Each chapter should be preceded by an abstract (no more than 200 words) that summarizes the content. The abstract will appear \textit{online} at \url{www.SpringerLink.com} and be available with unrestricted access. This allows unregistered users to read the abstract as a teaser for the complete chapter.
%Please use the 'starred' version of the \texttt{abstract} command for typesetting the text of the online abstracts (cf. source file of this chapter template \texttt{abstract}) and include them with the source files of your manuscript. Use the plain \texttt{abstract} command if the abstract is also to appear in the printed version of the book.}

\abstract{
Modified Patankar (MP) schemes are conservative, linear implicit and unconditionally positivity preserving time-integration schemes constructed for production-destruction systems. 
For such schemes, a classical stability analysis does not yield any information about the performance. Recently, two different techniques have been proposed to investigate the properties of MP schemes. In Izgin \textit{et al.} [ESAIM: M2AN, 56 (2022)], inspired from dynamical systems, the Lyapunov stability properties of such schemes have been investigated, while in Torlo \textit{et al.}[Appl. Numer. Math., 182 (2022)] their oscillatory behaviour has been studied. In this work, we investigate the connection between the oscillatory behaviour and the Lyapunov stability and we prove that a condition on the Lyapunov stability function is necessary to avoid oscillations. We verify our theoretical result on several numerical tests. 
% 120 words
}

%\input{1_introduction}  % label: se:introduction
%% \label se_numeric_method
\section{Introduction}\label{sec:intro}
Consider a production--destruction system (PDS) of ODEs
\begin{equation}\label{eq:PDS}
	\frac{\operatorname{d}y_i(t)}{\operatorname{d}t} = \sum_{j=1}^{I} \left( p_{ij}(y(t))-d_{ij}(y(t))\right), \qquad i=1,\dots,I,\quad t\in \R^{+}_0,
\end{equation}
where $p_{ij},d_{ij}:\R^I\to \R^+_0$ are Lipschitz continuous production and destruction functions, respectively, such that $p_{ij}(y)=d_{ji}(y)$ and $\lim_{y_i\to 0} d_{ij}(y)=0$. Then the system \eqref{eq:PDS} is conservative, i.e., $\sum_i y_i(t) = \sum_i y_i(0)$, and positive, that is, if $y_i(0)\geq 0$ for all $i$, then $y_i(t) \geq 0$ for all $i$.
These systems arise in various fields, e.g. chemical reactions and biological processes, but can be also obtained  from spatial discretisations of hyperbolic conservation/balance laws, e.g. shallow water equations or Euler equations.

Modified Patankar (MP) schemes are conservative, linear implicit and unconditionally positivity preserving time-integration schemes constructed for PDS, inspired by Patankar's original work \cite{patankar1980numerical}.
In recent years, many different MP schemes have been developed \cite{burchard2003high,kopecz2018unconditionally,huang2018third,oeffner_torlo_2019_DeCPatankar}, they have been applied to different applications \cite{ciallella2022arbitrary,huang2019positivity,meister2014unconditionally} and their properties have been studied \cite{kopecz2018order,HIKMSstab22,IKM22Sys,IOE22StabMP,kopecz2019existence,torlo2022issues}.

In the following, we compare the oscillations observed in 2 dimensional systems in \cite{torlo2022issues} and the Lyapunov stability function studied in \cite{izgin2022lyapunov}. Indeed, it is possible to show that a condition on the Lyapunov stability function is necessary to have oscillations--free schemes.
In Section \ref{sec:theorem}, we present the proof of this result; in Section \ref{sec:stability_functions}, we list some stability function of some MP schemes and in Section \ref{sec:numerics} we show how the numerical results validate the theoretical findings.

 %However, MP schemes are in particular constructed for PDS,  classical stability analysis does not yield any information about the performance of such schemes. Recently, two different techniques have been proposed to investigated mP schemes more precisely. In \cite{IKM22Sys,HIKMSstab22,izgin2022lyapunov,IOE22StabMP}, inspired from dynamical systems, the Lyapunov stability properties of such schemes have been investigated where in \cite{torlo2022issues} the oscillatory behaviour have been studied. In the following short note, we investigate the connection between the oscillatory behaviour and the stability properties and prove a necessary conditions on the stability function to avoid oscillations inside the calculations. In several numerical simulations, we verify our theoretical result.
\section{Connection between oscillations and Lyapunov stability}\label{sec:theorem}
We restrict to a linear 2--dimensional problem, in order to have a clear definition of oscillations \cite{torlo2022issues}.
All 2--dimensional linear systems of ODEs that are positive and conservative can be rewritten, with a change of variables, as the following IVP
\begin{equation}\label{eq:Testprob}
	\begin{cases}
	\by'(t)=\bA_\theta\by(t),\\
\by(0)=\by^0>\bzero,
	\end{cases}\quad \bA_\theta=\begin{pmatrix*}[r]
	-\theta &1-\theta\\ \theta & -(1-\theta)
\end{pmatrix*},\quad \theta\in(0,1),
\end{equation}
where this can be seen as PDS, with $p_{12}=d_{21}=(1-\theta)y_2$, $d_{12}=p_{21}=\theta y_1$ and all other entries zero. Let us also consider a one step numerical method whose iterates are generated by a map $\bg$, i.\,e.\ $\by^{n+1} := \bg(\by^n)$. Note that $\bg$ might be given implicitly.

 We first describe oscillations for 2--dimensional linear ODEs through the solution and the steady state. It is known that the exact solution does not overshoot the steady state.
\begin{definition}
	\begin{enumerate}
	\item\label{it:defovershoota} A method is \emph{not overshooting} the steady state of \eqref{eq:Testprob} if $y_2^1<\theta$ and $y_1^1>1-\theta$ for any given initial state $\by^0=(1-\epsilon,\epsilon)^\intercal$ with $\epsilon<\theta$, while when $\epsilon>\theta$ the method is \emph{not overshooting} the steady state if $y_2^1>\theta$ and $y_1^1<1-\theta$.
	\item Otherwise the method is said to be \emph{overshooting} the steady state of \eqref{eq:Testprob}.
	\end{enumerate}	
\end{definition}
\begin{theorem}\label{Thm_MPRK_stabil}
	Let any positive steady state of \eqref{eq:Testprob} be a fixed point of a map $\bg\in \mathcal{C}^2(\R^2_{>0})$. In addition, let the iterates generated by $\by^{n+1}=\bg(\by^n)$ satisfy $\lVert \by^{n+1} \rVert_1=\lVert \by^n \rVert_1$ for all $n\in \N_0$. Finally, let $\by^*$ be the unique positive steady state of \eqref{eq:Testprob}.
	
	Then, the spectrum of the Jacobian $\bD\bg(\by^*)$ is $\sigma(\bD\bg(\by^*))=\{1,R\}$ with $R\in\R$. Furthermore, if $R<0$, then the method generated by $\bg$ is overshooting the steady state of \eqref{eq:Testprob}.
		%\item If $R<0$, then there exist initial values $\by^0$ such that the sequence $(\by^n)_{n\in \N_0}$ oscillates around the stationary solution. \comment{welcher steady state?}
\end{theorem}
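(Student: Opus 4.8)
\textit{Proof sketch (plan).}
The plan is to exploit the two structural constraints on $\bg$ --- conservation of total mass and the fact that every positive steady state is a fixed point --- to pin down the spectrum of $\bD\bg(\by^*)$, and then to detect the overshoot from a first--order Taylor expansion of $\bg$ at $\by^*$. Throughout, note that the positive steady states of \eqref{eq:Testprob} are exactly the positive multiples of $\by^*=(1-\theta,\theta)^\intercal$ (the one with unit mass), and recall that by hypothesis the iterates preserve $\norm{\cdot}_1$.

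First I would differentiate the mass--conservation identity $g_1(\by)+g_2(\by)=y_1+y_2$, which holds for $\by$ in a neighbourhood of $\by^*$ (where the iterates stay positive). This gives $(1,1)\,\bD\bg(\by)=(1,1)$, so $(1,1)$ is a left eigenvector of $\bD\bg(\by^*)$ for the eigenvalue $1$ and hence $1\in\sigma(\bD\bg(\by^*))$. Since $\bD\bg(\by^*)$ is a real $2\times2$ matrix, its characteristic polynomial has real coefficients; with one root equal to $1\in\R$, the other root is forced to be real as well, namely $R=\operatorname{tr}\bD\bg(\by^*)-1$. This already yields $\sigma(\bD\bg(\by^*))=\{1,R\}$ with $R\in\R$. (Differentiating $\bg(c\,\by^*)=c\,\by^*$ with respect to $c$ at $c=1$ additionally gives $\bD\bg(\by^*)\by^*=\by^*$, i.e.\ $\by^*$ is also a right eigenvector for $1$.)

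Now assume $R<0$; then $R\neq1$, the eigenvalue $1$ is simple, and $\bD\bg(\by^*)$ is diagonalisable. Let $\bv$ be a right eigenvector for $R$. Multiplying $\bD\bg(\by^*)\bv=R\bv$ on the left by $(1,1)$ and using $(1,1)\bD\bg(\by^*)=(1,1)$ yields $(1-R)\,(1,1)\bv=0$, so $(1,1)\bv=0$ and $\bv$ is a nonzero multiple of $(1,-1)^\intercal$. I would then pick the initial state $\by^0=(1-\epsilon,\epsilon)^\intercal$ with $0<\epsilon<\theta$ and write it as $\by^0=\by^*+s\,(1,-1)^\intercal$ with $s=\theta-\epsilon>0$. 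A Taylor expansion of $\bg$ at the fixed point $\by^*$ gives
\[
\by^1=\bg(\by^0)=\by^*+s\,\bD\bg(\by^*)(1,-1)^\intercal+o(s)=\by^*+sR\,(1,-1)^\intercal+o(s),
\]
i.e.\ $y_1^1=(1-\theta)+sR+o(s)$ and $y_2^1=\theta-sR+o(s)$. Since $R<0$ and $s>0$, the linear term $sR$ is negative and of order exactly $s$, so for every $\epsilon$ sufficiently close to $\theta$ from below we obtain $y_1^1<1-\theta$ and $y_2^1>\theta$ --- which is exactly the negation of the \emph{not overshooting} condition in the regime $\epsilon<\theta$. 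Hence the method generated by $\bg$ is overshooting the steady state of \eqref{eq:Testprob}. (The regime $\epsilon>\theta$ works symmetrically: then $s<0$, $sR>0$, and $y_2^1<\theta$, $y_1^1>1-\theta$, again contradicting the corresponding condition.)

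I expect the only delicate point to be controlling the remainder in the Taylor expansion: one must ensure that the linear contribution $sR\,(1,-1)^\intercal$, which does not vanish because $R\neq0$, strictly dominates the $o(s)$ term, so that the strict inequalities $y_1^1<1-\theta$ and $y_2^1>\theta$ persist throughout a one--sided neighbourhood of $\theta$. This is exactly where the $\mathcal{C}^2$ regularity of $\bg$ is used (in fact $\mathcal{C}^1$ at $\by^*$ suffices); everything else reduces to elementary $2\times2$ linear algebra.
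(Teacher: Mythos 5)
Your proof is correct, and it rests on the same underlying idea as the paper's proof --- a local perturbation analysis at $\by^*$ in which the negative eigenvalue $R$ forces the deviation along $(1,-1)^\intercal$ to change sign --- but the execution is genuinely different in two respects. First, you derive the spectral claims from scratch: differentiating the conservation identity yields the left eigenvector $(1,1)$ for the eigenvalue $1$ (hence $R\in\R$ from the real characteristic polynomial), and the orthogonality relation $(1-R)(1,1)\bv=0$ identifies the $R$-eigenvector as a multiple of $(1,-1)^\intercal$; the paper instead imports $\bD\bg(\by^*)\by^*=\by^*$ and $\bD\bg(\by^*)\bby=R\bby$ from \cite[Theorem 2.9]{izgin2022lyapunov}. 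Second, for the overshoot the paper changes coordinates via $\bS=(\by^*\;\,\bby)$, writes the iteration as $\diag(1,R)\bw+\bS^{-1}\bar{\bm R}$ with an explicit second-order Lagrange remainder, bounds that remainder by a constant $K$ on a compact segment, and obtains the quantitative window $\lvert w_2^0\rvert<\min\{y_1^*/2,\,y_2^*/2,\,\lvert R\rvert/K\}$ in which the sign of the deviation flips; you use the first-order expansion $\by^1=\by^*+sR\,(1,-1)^\intercal+o(s)$ and let $s\to 0$, which is shorter, needs only differentiability at $\by^*$ (so $\mathcal{C}^1$ indeed suffices for this half), and still exhibits an overshooting initial datum, which is all the definition demands. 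What you give up is the explicit size of the set of initial data for which overshoot is guaranteed. One point worth making explicit: your identity $g_1(\by)+g_2(\by)=y_1+y_2$ on a neighbourhood of $\by^*$ requires reading the norm-conservation hypothesis as holding for every positive initial state and $\bg$ as mapping into $\R^2_{>0}$; the paper's proof relies on exactly the same reading, so this is not a gap relative to the original.
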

%%% Beginn Beweis %%%%%%%%%%%%%%%%%%%%%%%%%
\begin{proof}
	Throughout this proof, we use $\be_1=(1,0)^\intercal$, $\be_2=(0,1)^\intercal$ to denote the standard unit vectors as well as the notation $\bby=(1,-1)^\intercal$. In the proof of \cite[Theorem 2.9]{izgin2022lyapunov}, it is shown that $\bD\bg(\by^*)\by^*=\by^*$ and $\bD\bg(\by^*)\bby=R\bby$ with $R\in \R$, which means that the matrix of eigenvectors
	\begin{equation}\label{eq:Smatrix}
		\bS=(\by^*\;\;  \bby)
	\end{equation}
	is invertible since $\bby$ cannot be a multiple of the positive vector $\by^*$. In particular, we obtain
	\begin{equation*}%\label{eq:DG_digaonal}
		\bS^{-1}\bD\bg(\by^*)\bS=\diag(1,R),
	\end{equation*}
	where $\diag(\by)\in\R^{2\times 2}$ denotes the diagonal matrix with $(\diag(\by))_{ii}=y_i$ for $i=1,2$. Following the lines of the proof of \cite[Theorem 2.9]{izgin2022lyapunov}, we introduce the affine linear transformation $\bT\from\R^2\to\R^2$, \[\by\mapsto\bw=\bT(\by)=\bS^{-1}(\by-\by^*),\]
	where $\bS$ is given in \eqref{eq:Smatrix} and
	the inverse transformation $\bT^{-1}$ is given by \[\bT^{-1}(\bw)=\bS\bw+\by^*.\] 
	
	To see that the method defined by $\bg$ is overshooting $\by^*$, we show that the transformed method given by the map
	\begin{equation*}
		\bG\from \bT(\R_{>0}^2)\to \bT(\R_{>0}^2),\quad \bG(\bw) = \bT(\bg(\bT^{-1}(\bw)))%\label{eq:TgTinv}.
	\end{equation*}
is overshooting the transformed steady state which is $\bw^*=\bzero$.
As demonstrated in \cite[Theorem 2.9]{izgin2022lyapunov}, $\by^0$ is transformed onto the $w_2$-axis and due to the conservation of the map $\bg$, it is proven that $\bG(\bw^0)\in\Span(\bw^0)$ for $\bw^0=(0,w_2^0)^\intercal$. Moreover, 
\begin{equation*}%\label{eq:G}
	\bG(\bw) = \diag(1,R)\bw + \bS^{-1}\bar{\bm R}(\bT^{-1}(\bw))
\end{equation*}
holds, where $\bar{\bm R}$ denotes the Lagrangian remainder
\begin{align}
	(\bar{\bm R}(\by))_i=\frac{1}{2}(\by-\by^*)^\intercal\bH g_i(\by^*+c_i(\by-\by^*))(\by-\by^*),\quad i=1,2\label{Lagrange_remainder}
\end{align}
for some $c_i\in(0,1)$ depending on $\by$ and $\by^*$ and where $\bH g_i$ are the Hessian matrices of $g_i$ for $i=1,2$.
We consider from now on the iterates given by
		\begin{equation*}
		\bw^{n+1}=\begin{pmatrix}
			1 &0\\
			0& R
		\end{pmatrix}\bw^n+\bS^{-1}\bar{\bm R}(\bT^{-1}(\bw^n)),\quad \bw^0=(0,w^0_2)^\intercal.
	\end{equation*}
Here, using $\bS^{-1}=(\widetilde s_{ij})_{i,j=1,2}$ and $w_1^n=0$ it follows from \eqref{Lagrange_remainder} that
\begin{equation}
	(\bS^{-1}\bar{\bm R}(\bT^{-1}(\bw^0)))_1=0
\end{equation}
 since $(\bG(\bw))_1=w_1$. Furthermore,
	\begin{equation}\label{eq:remainder}
		\begin{aligned}
			(\bS^{-1}\bar{\bm R}(\bT^{-1}(\bw^0)))_2=&\frac{1}{2} \sum_{i=1}^2 \widetilde s_{2i}(\bT^{-1}(\bw^0)-\by^*)^\intercal\bH g_i(\bxi^0_i)(\bT^{-1}(\bw^0)-\by^*)\\
			%&+\widetilde s_{22}(\bT^{-1}(\bw^0)-\by^*)^\intercal\bH g_2(\bxi^0)(\bT^{-1}(\bw^0)-\by^*) \Biggr)\\
			=&\frac{1}{2}\sum_{i=1}^2 \widetilde s_{2i}(w_2^0\bS\be_2)^\intercal\bH g_i(\bxi^0_i)(w_2^0\bS\be_2)\\%+\widetilde s_{22}(w_2^0\bS\be_2)^\intercal\bH g_2(\bxi^0)(w_2^0\bS\be_2) \right)\\
			=&\frac{1}{2}\sum_{i=1}^2 \widetilde s_{2i}(w_2^0\bby)^\intercal\bH g_i(\bxi^0_i)(w_2^0\bby)\\%+\widetilde s_{22}(w_2^0\bby)^\intercal\bH g_2(\bxi^0)(w_2^0\bby) \right)\\
			=&C(\bxi^0_1,\bxi^0_2)\cdot (w_2^0)^2,
		\end{aligned}
	\end{equation}
where $\bxi^0_i=\by^*+c_i^0(\by^0-\by^*)$ and $c^0_i\in (0,1)$. Also note that the mapping \mbox{$C\colon \R^2\times \R^2\to\R$} depends on the entries of the Hessians as well as $\bS^{-1}$. 

We now prove that the method defined by $\bG$ is overshooting $\bw^*=\bzero$ by proving the existence of $w_2^0\in \R$ such that $\sgn(w_2^1)\neq\sgn(w_2^0)$. 
We set \[L=\left\{\by\in \R^2\Big| \exists s\in\left[-\tfrac{y_1^*}{2},\tfrac{y_2^*}{2}\right]: \by=\by^*+s\bby\right\}\tm\R^2_{>0}\] and observe that there exists a $K>0$ such that $\sup_{\bxi\in L\times L}\{\lvert C(\bxi_1,\bxi_2)\rvert\}\leq  K<\infty$ since $\bg\in \mathcal C^2$ has bounded second derivatives on the compact set $L$. 

Next, we restrict to $\bw^0$ satisfying $\lvert w_2^0\rvert< \min\left\{\tfrac{y_1^*}{2},\tfrac{y_2^*}{2},\frac{\lvert R\rvert}{K}\right\}$. As a result, $\bw^0=w_2^0\be_2$ yields $\by^0=\bT^{-1}(\bw^0)=\bS\bw^0+\by^*=w_2^0\bby+\by^*\in L$, which means that 
\[\bxi^0_i=\by^*+c^0_i(\by^0-\by^*)=\by^*+c^0_iw_2^0\bby\in L\]
for $i=1,2$.
Now, according to \eqref{eq:remainder}, we have
\begin{equation}
	w^{1}_2=Rw_2^0+C(\bxi^0_1,\bxi^0_2)\cdot(w_2^0)^2=(R+C(\bxi^0_1,\bxi^0_2) w_2^0)w_2^0.\label{w2(n+1)}
\end{equation}
as well as
\begin{equation}
C(\bxi^0_1,\bxi^0_2) w_2^0\leq \lvert C(\bxi^0_1,\bxi^0_2)\rvert \lvert w_2^0\rvert <\lvert C(\bxi^0_1,\bxi^0_2)\rvert\frac{\lvert R\rvert}{K}\leq \lvert R\rvert.\label{cn w2n<R delta}
\end{equation}
Because of $R<0$, the inequality \eqref{cn w2n<R delta} turns into the statement
\begin{equation*}
	R+C(\bxi^0_1,\bxi^0_2)w_2^n<0,
\end{equation*}
and thus, $\sgn(w_2^1)\neq\sgn(w_2^0)$ due to \eqref{w2(n+1)}. This proves that the method defined by $\bG$ is overshooting $\bw^*$ and consequently, the method with iterates given by the map $\bg$ is overshooting $\by^*$.	
\end{proof}
\begin{remark}
It was proven in \cite{izgin2022lyapunov} that if $\lvert R\rvert <1$ holds true, then $\by^*$ is a Lyapunov stable fixed point of the method, whereas it is already well-known that if $\lvert R\rvert >1$ the corresponding fixed point $\by^*$ is unstable, see \cite{SH98} for more details. 
Furthermore, we want to note that for a numerical time-integration method, the eigenvalue $R$ depends on the time step size $\Delta t$, so that $R$ can be interpreted as a \emph{stability function} giving rise to the investigation of stability regions. 
The result from \cite{izgin2022lyapunov} was generalized, see \cite[Theorem 2.9]{IKM22Sys}, and applied to many positivity-preserving schemes in \cite{HIKMSstab22,IKM22Sys, IOE22StabMP}. 
To that end, the corresponding stability functions have been computed, so that we only need to investigate the location of their zeros for investigating the methods with respect to the property of overshooting the steady state of \eqref{eq:Testprob}. 
\end{remark}

\section{Analysis of Modified Patankar Schemes}\label{sec:stability_functions}
In the following, we list the stability functions of some MP schemes. For brevity, we refer to other references for the explicit computations, when available.
%\subsection{Modified Patankar--Runge--Kutta Methods}
As derived in \cite{izgin2022lyapunov}, the stability function of the second order family of MPRK22($\alpha$) schemes, first introduced in \cite{burchard2003high}, is given by
\begin{equation}
	R(z)=\frac{-z^2-2\alpha z+2}{2(1-\alpha z)(1-z)}.
\end{equation}
This function has negative values for negative real part of $z$ if $\Re (z) <- \alpha -\sqrt{\alpha^2+2}$. Hence, for the problem \eqref{eq:Testprob} we obtain the necessary condition 
\begin{equation}\label{eq:lyap_MPRK221}
\dt < \dt_0(\alpha):=\alpha +\sqrt{\alpha^2+2}
\end{equation}
for the method not to overshoot the steady state.
%dt = x + sqrt(x^2 + 2)   $x=alpha$ (in davide's plot to compare)

The stability functions of the families of  MPRK(4,3,$\alpha$, $\beta$) and  MPRK(4,3,$\gamma$) \cite{kopecz2018unconditionally} and the simple MPRK32 \cite{torlo2022issues} are computed in \cite{IOE22StabMP} and not reported here for brevity.

%MPRK(4,3,alpha, beta) complicated 

%MPRK(4,3,gamma)  (stability is indep of gamma)
%\subsection{Strong-Stability-Preserving Modified Patankar--Runge--Kutta Methods}
Similarly, for SSPMPRK schemes we do not report the stability function of SSPMPRK22($\alpha$,$\beta$) \cite{huang2019positivity}, which can be found in \cite{HIKMSstab22}, but we focus on the SSPMPRK43($\eta_2$) for $\eta=\frac{1}{3}$ \cite{huang2018third}.
This scheme possesses the stability function
$
R(z)=\frac{\sum_{i=1}^4a_iz^i}{\sum_{j=1}^4b_jz^j},
$
where, at double precision
\begin{equation*}
	\begin{aligned}
a_0&=1, & b_0&=1,\\
a_1&=-3.349136322977521, & b_1&=-4.349136322977523,\\
a_2&=2.049225690609540, & b_2&= 5.898362013587063,\\
a_3&=0.6815805312568625, & b_3&= -3.208879987508106,\\
a_4&=-0.5093985705698671, & b_4&= 0.6087426554481902.
	\end{aligned}
\end{equation*}
%\subsection{Modified Patankar-Deferred-Correction Methods}

For the Modified Patankar-Deferred-Correction (MPDeC) methods \cite{oeffner_torlo_2019_DeCPatankar}, we derive the stability functions as in \cite{IOE22StabMP} and we show some examples for different orders. The MPDeC schemes are a class of arbitrarily high order positivity preserving methods, based on the Deferred Correction (DeC) methods \cite{Decoriginal,Decremi}. At each stage of the DeC procedure the modified Patankar trick is adopted, carefully choosing the production and destruction terms, according to the DeC coefficients. 
The MPDeC schemes are defined by $M$ subtimesteps and $K$ iterations. The order of accuracy of the MPDeC scheme is the minimum between $K$ and the accuracy of the quadrature formula given by the $M$ subtimesteps. We will focus on equispaced (\eq) and Gauss--Lobatto (\gl) subtimesteps. To obtain order $p$, a number of $K=p$ iterations is required, while we need  $M=\max\{p-1,1\}$ \eq\,subtimesteps or $M=\left\lceil \frac{p}{2} \right \rceil$ \gl\, subtimesteps. The definition of the subtimesteps $0=t^0< \dots < t^M=1$ leads to the definition of the coefficients $\theta_r^m:=\int_{0}^{t^m}\varphi_r(t)dt$ that are the ground component of the MPDeC schemes. Here, $\varphi_r$ is the $r$-th Lagrangian function defined by the subtimenodes $\lbrace t^m \rbrace_{m=0}^M$. 

We denote the MPDeC scheme of order $p$ by MPDeC($p$) and the corresponding stability function $R_p$ can be computed with the following steps:
\begin{equation*}
	\begin{aligned}
			R^{m,(1)}(z)&=\frac{1+2z\sum_{j=0}^M\theta_{j,-}^m}{1-z \sum_{r=0}^M\lvert \theta_r^m\rvert},\\
		R^{m,(\hat k)}(z)&=\frac{1+\theta_0^mz+z\displaystyle\sum_{\substack{j=1\\j\neq m}}^M\theta_j^mR^{j,(\hat k-1)}(z)-z\left( \sum_{\substack{j=0\\j\neq m}}^M\lvert \theta_j^m\rvert -2\theta_{m,-}^m\right)R^{m,(\hat k-1)}(z)}{1-z \sum_{j=0}^M\lvert \theta_j^m\rvert.},\\
		R_p(z)&=R^{M,(K)}(z),
		\end{aligned}
\end{equation*}
for $\hat k=2,\dotsc,K$ and $m=1,\dotsc,M$, where
$
	  \theta_{r,\pm}^m=\frac{\theta_r^m\pm\lvert \theta_r^m\rvert}{2}
$, see \cite{IOE22StabMP} for the details.
We introduce the matrix $\Theta^{X,(p)}\in \R^{M\times (M+1)}$ satisfying 
$\Theta^{X,(p)}_{mr}=\theta_{r-1}^m,
$
where  $X\in\{\eq,\gl\}$ indicates either \eq\, or \gl\, points.
In the case of $p=2$, i.\,e., $M=1$ and $K=2$ we have 
$
		\Theta^{\eq,(2)}=	\Theta^{\gl,(2)}=\Vec{\tfrac12&\tfrac12},
$
that is $\theta_0^1=\theta_1^1=\tfrac12$, and consequently
\begin{equation}
	R_2(z)=\frac{-z^2-2z+2}{2(1-z)^2},
\end{equation}
which equals the stability function of MPRK22($\alpha$) for $\alpha=1$. This is no surprise since MPDeC($2$) is the MPRK22($1$) scheme. 
Next, for $p=3$ we find
\begin{equation*}
	\Theta^{\eq,(3)}=	\Theta^{\gl,(3)}=\Vec{\tfrac{5}{24}& \tfrac13& -\tfrac{1}{24}\\ \tfrac16& \tfrac23& \tfrac16}
\end{equation*}
leading to
\begin{equation*}
R_3(z)=\frac{-331z^5 + 1830z^4 + 3096z^3 - 16452z^2 + 16416z - 5184}{36(-12 + 7z)^2(-1 + z)^3}.
\end{equation*}
Moreover, for $p=4$ and \eq\,subtimesteps we have
\begin{equation*}
	\Theta^{\eq,(4)}=\Vec{\tfrac{1}{8}&\tfrac{19}{72}& -\tfrac{5}{72} &\tfrac{1}{72}\\ \tfrac{1}{9} &\tfrac{4}{9}&\tfrac{1}{9}&0\\
		\tfrac{1}{8} &	\tfrac{3}{8}&	\tfrac{3}{8}&	\tfrac{1}{8}}
\end{equation*}
resulting in
\begin{equation*}
	\begin{aligned}
		R^{\eq}_4(z)& =\frac{\sum_{j=0}^{10}d_jz^j}{1536(-36 + 17z)^3(-3 + 2z)^3(-1 + z)^4},
	\end{aligned}
\end{equation*}
where
\begin{equation*}
	\begin{aligned}
		d_0&= 1934917632,&
		d_1&=-12415721472, &
		d_2&=3402678067,\\
		d_3&= -51295431168,&
		d_4&=45088151040,&
		d_5&= -22031034912,\\
		d_6&= 4329437784,&
		d_7&= 82352116,&
		d_8&=-534268140, \\
		d_9&=64784148, &
		d_{10}&=1805344.
	\end{aligned}
\end{equation*}
On the other hand, for \gl\, and $p=4$ we use $\Theta^{\gl,(4)}=\Theta^{\gl,(3)}$
%\begin{equation*}
%	\Theta^{gl,(4)}=\Vec{\tfrac{\sqrt5+11}{120}&\tfrac{25-\sqrt5}{120}& \tfrac{25-13\sqrt5}{120} &\tfrac{-1+\sqrt5}{120}\\ \tfrac{11-\sqrt5}{120} &\tfrac{25+13\sqrt5}{120}&\tfrac{25+\sqrt5}{120}&-\tfrac{1+\sqrt5}{120}\\
%		\tfrac{1}{12} &	\tfrac{5}{12}&	\tfrac{5}{12}&	\tfrac{1}{12}}
%\end{equation*}
  with $K=4$ and $M=2$, obtaining
a rational function with a polynomial of degree $7$ in the numerator and denominator, which can be represented by
\begin{equation*}
	\begin{aligned}
	R^{\gl}_4(z)& =\frac{\sum_{j=0}^{7}c_jz^j}{(7z\sqrt5 + 5z - 60)^3(7z\sqrt5 + 31z - 60)^4},
\end{aligned}
\end{equation*}
where
\begin{equation*}
	\begin{aligned}
	 c_0&= -279936\cdot 10^7,&
	c_1&=(1982880\sqrt5 + 5062176)\cdot 10^6, \\
	c_2&=(-28409616\sqrt5 - 58953744)\cdot 10^5,&
	c_3&= (157481496\sqrt5 + 347034456)\cdot 10^4,\\
	c_4&= -262068264000\sqrt5 - 617156712000,&
	c_5&= -55771610400\sqrt5 - 129811572000,\\
	c_6&=13763385600\sqrt5 + 34116840000,&
	c_7&= 1038579760\sqrt5 + 2083625200.
\end{aligned}
\end{equation*}
For higher order and other schemes, we refer to the Maple code in the reproducibility repository \cite{ourrepo}.
\section{Numerical Comparison}\label{sec:numerics}
In this section, we compare the numerical bound $\dt_0$ for $\dt$ not to be oscillating \cite{torlo2022issues} with the necessary condition given by the Lyapunov stability function derived following \cite{izgin2022lyapunov}. The Julia Jupyter notebook used to compute the numerical bound and the Maple notebook where the Lyapunov stability functions are computed are available in the reproducibility repository \cite{ourrepo}.
Those notebooks can be used also to compute the bounds for different parameters of the presented schemes that could not fit in this work.
\begin{figure}
	\subfigure[MPRK22($\alpha$)\label{fig:MPRK22alpha}]{
		\includegraphics[width=0.49\textwidth]{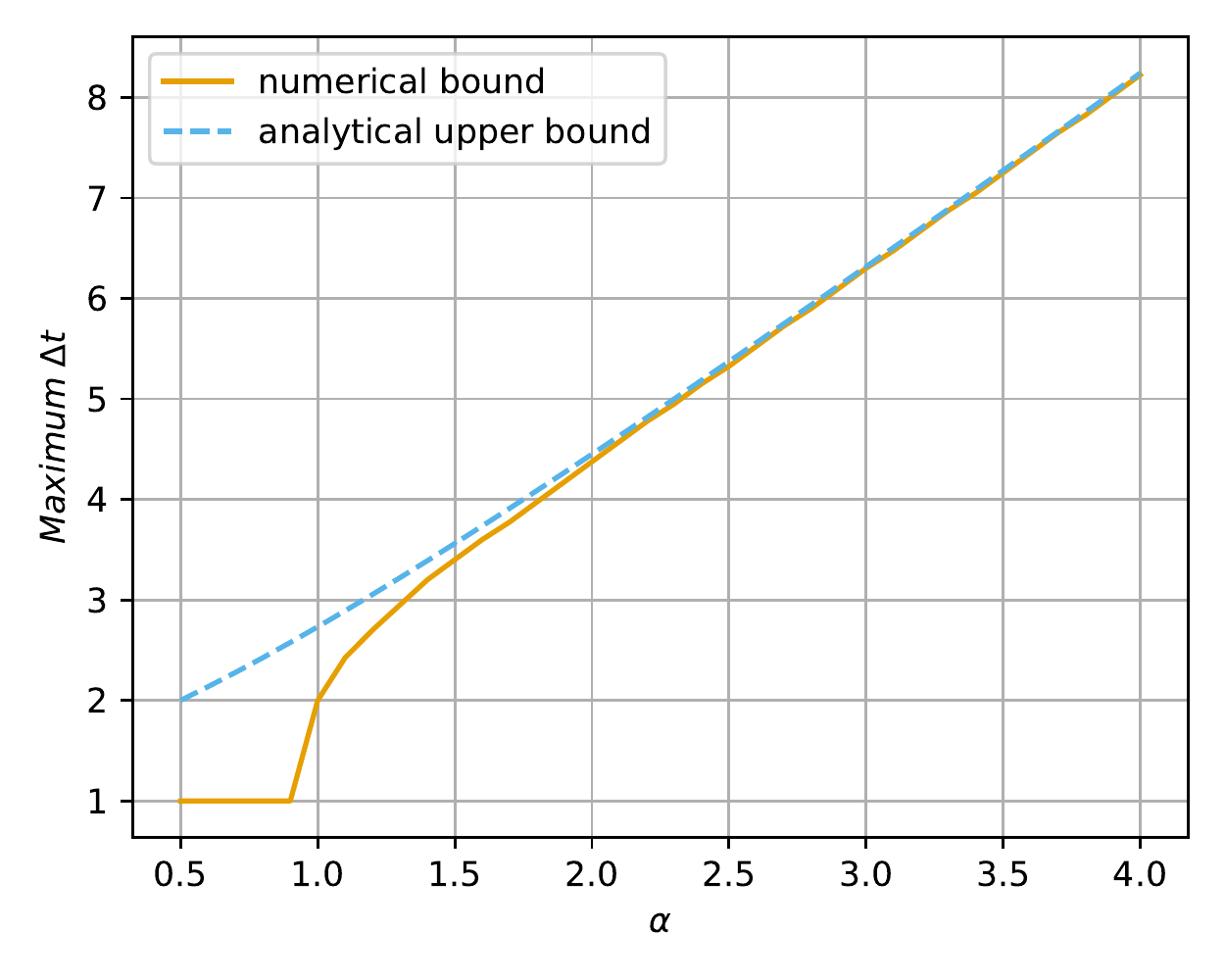}
	}\hfill
	\subfigure[MPRK43($\gamma$), analytical upper bound 2.35\label{fig:MPRK43gamma}]{
		\includegraphics[width=0.49\textwidth]{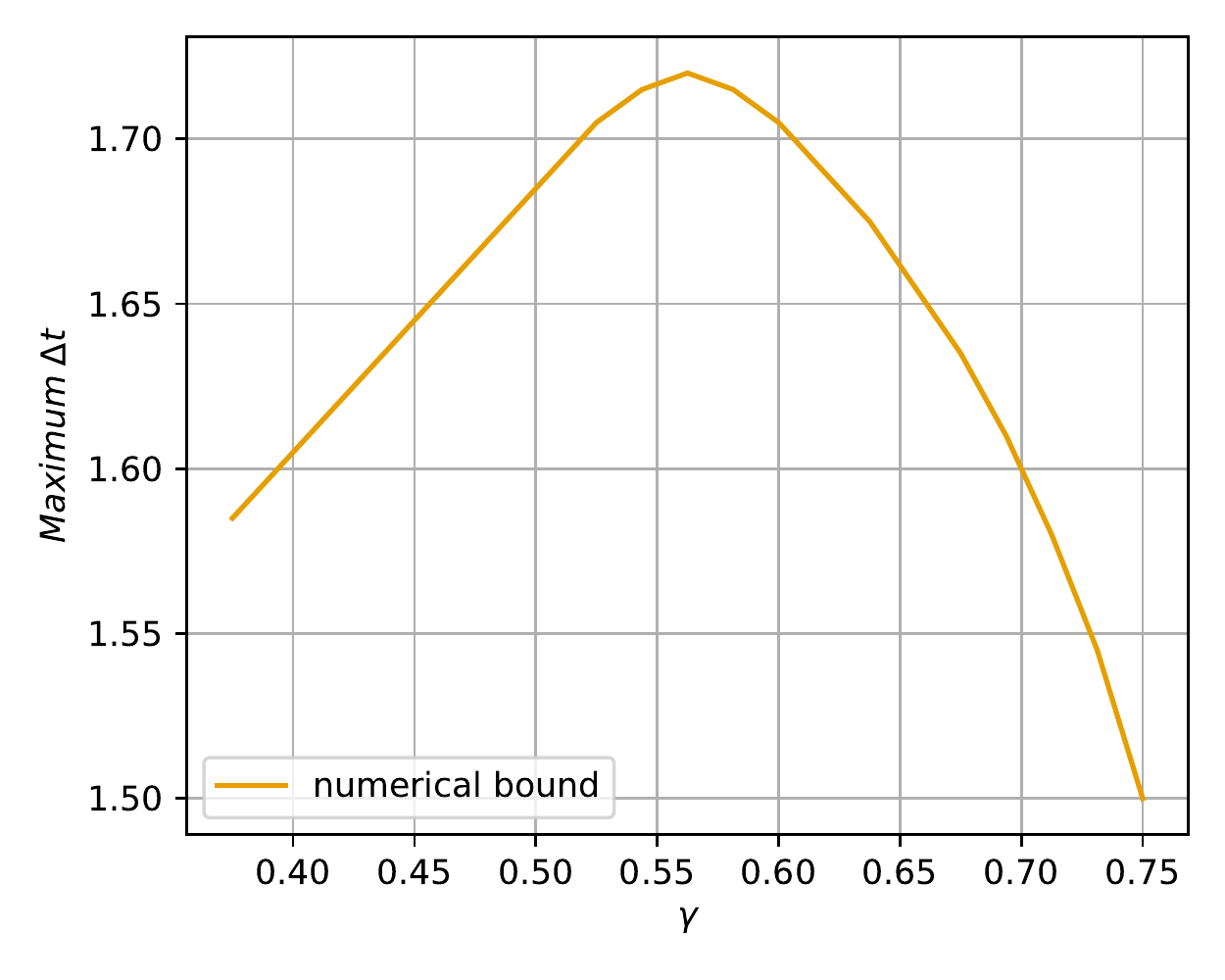}
	}
\caption{Plot of numerical bound for $\dt$ (orange) and Lyapunov stability $\dt$ bound \eqref{eq:lyap_MPRK221} (blue) for the MPRK22($\alpha$) and MPRK43($\gamma$) families of schemes}\label{fig:mprk}
\end{figure}

In Figure \ref{fig:MPRK22alpha}, we show the two bounds on $\Delta t$ for MPRK(2,2,$\alpha$) \cite{burchard2003high} varying $\alpha$. 
We observe that there is a very good agreement between the two conditions for $\alpha>1.5$, while for smaller values the error is bounded by $\sqrt{3}$.
For the MPRK(4,3,$\gamma$) \cite{kopecz2019existence} we observe that the numerical bound in Figure \ref{fig:MPRK43gamma} is not as close as before to the Lyapunov stability bound $2.35$ (independently on $\gamma$), but still it is giving an indication of the magnitude of the bound.

In Tables \ref{tab:mpdecequi} and \ref{tab:mpdecGLB}, we write the numerical $\dt$ bound and the necessary condition given by the Lyapunov stability function in Theorem \ref{Thm_MPRK_stabil} for \eq\,and \gl\, MPDeC, respectively. Here, we notice very different behaviors between \eq\, and \gl\, MPDeC. In the \eq\, case, the bounds are widely varying across different orders of accuracy, in the numerical simulations, while for the theoretical bound, we get very large constraints that are not very useful. On the other side, for \gl, the numerical bounds converge very quickly to 1 as the order increases. The Lyapunov stability function leads to a not so sharp bound, but much closer to the numerical one.
\begin{figure}
	\subfigure[MPDeC \eq]{
	\begin{tabular}{|c|c|c|}\hline
		$p$ & num. $\dt_0$ & Lyap. $\dt_0$ \\ \hline 
		1 & $\infty$ &  $\infty$\\
		2 & 2.0 & $2.73$\\ 
		3 & 1.19& 3.31\\ 
		4 & 1.11& 3.83\\ 
		5 & 1.07& 4.19\\ 
		6 & 1.04& $\infty$\\ 
		7 & 1.04& $\infty$\\ 
		8 & 1.37& $\infty$\\ 
		9 & 6.96& $\infty$\\ 
%		10 & 1.0&3.66\\ 
%		11  &15.5&\\ 
%		12  &1.0 &\\ 
%		13  &35.51& \\ 
%		14  &1.07 &\\ 
%		15  &12.13&\\ 
%		16  &1.80 &\\ 
\hline
	\end{tabular}\label{tab:mpdecequi}
}
\subfigure[MPDeC \gl]{
	\begin{tabular}{|c|c|c|}\hline
		$p$ & num. $\dt_0$ & Lyap. $\dt_0$ \\ \hline 
		1 & $\infty$ &  $\infty$ \\
		2 & 2.0 & $2.73$\\ 
		3 & 1.19 & 3.31\\ 
		4 & 1.07 & 3.62\\ 
		5 & 1.04 & 3.74\\ 
		6 & 1.0 & 4.06\\ 
		7 & 1.0 & 4.47\\ 
		8 & 1.0 & 5.03\\ 
		9 & 1.0 & 20.1\\ 
	%	10 & 1.0 &\\ 
%		11 & 1.0 &\\ %
%		12 & 1.0 &\\ 
%		13 & 1.0 &\\ 
%		14 & 1.0 &\\ 
%		15 & 1.0 &\\ 
%		16 & 1.0 &\\ 
 \hline
	\end{tabular}\label{tab:mpdecGLB}
}\hfill
\subfigure[Other schemes]{
	\begin{tabular}{|c|c|c|}\hline
		Method & num. $\dt_0$ & Lyap. $\dt_0$ \\ \hline 
		SSPMPRK(4,3) & 1.31 & 2.15\\
		MPRK(3,2) & 16.56 & $\infty$\\ 
		MPRK(4,3,2,0.6) & 1.89 & 3.07\\ 
		MPRK(4,3,0.9,0.5) & 1.59 & 2.82\\ 
		MPRK(4,3,0.5,0.7) & 1.74 & 2.00\\ 
		MPRK(4,3,3,$\tfrac{7}{15}$) & 5.37 &5.62\\ 
		SSPMPRK(2,2,0,1) & 2 & $2.73$\\ 
		SSPMPRK(2,2,0,2) & 4.36 & $4.45$\\ 
		SSPMPRK(2,2,0.4,1) & 1.27 & 2.14\\ 
		SSPMPRK(2,2,0.1,4) & 2.10 &2.37\\ 
		\hline
	\end{tabular}\label{tab:otherschemes}
}
\caption{Numerical bound for $\dt$ and Lyapunov stability function $\dt$ bound  for various schemes}\label{fig:mpdecother}
\end{figure}

In Table \ref{tab:otherschemes}, we summarize the results for a selection of other schemes for various parameters.
In all cases, we observe, as predicted by Theorem \ref{Thm_MPRK_stabil}, that the numerical bound is smaller than the Lyapunov stability function bound.
The discrepancy between the two approaches vary a lot between different schemes and even between different parameters of the same method family, as already observed for MPRK(2,2,$\alpha$).
We observe, in general, lower discrepancy for second order schemes, e.g. SSPMPRK(2,2,0,2) and SSPMPRK(2,2,0.1,4), and higher discrepancy for higher order schemes, e.g. SSPMPRK(4,3) and MPRK(4,3,2,0.6). A special remark on MPRK(3,2) is necessary, as it is the second order scheme with the largest $\dt_0$. Its numerical bound is very large $\approx 16.5$, while there is no Lyapunov stability function bound. This shows, again, that this scheme performs very robustly in these simulations. % label se_numerics
\section{Conclusion}\label{sec:conclusion}
We have shown that the oscillations that modified Patankar schemes show in two--dimensional systems are linked to the Lyapunov stability function. 
In particular, it is necessary that the Lyapunov stability function is nonnegative to have an oscillations--free method. 
In particular, these conditions are verified for $\dt\leq \dt_0$, where $\dt_0$ depends on the scheme. 
We validated the theoretical results with many numerical tests showing that the bound coming from the Lyapunov stability function is always larger than the numerical one.

The found results are useful to choose the time step to avoid oscillations. In many situations, the theoretical bound and the numerical one are actually very close and this gives an indication on how to adopt the time step. 
Furthermore, there are still open questions on the behavior of MP schemes, in particular for hyperbolic problems, where the positivity of various physical quantities is of paramount importance. 
We plan to extend this work to a stability analysis of fully discrete MP schemes hoping to find connections with the found oscillations bounds. Furthermore, it is of interest to investigate Lyapunov stability properties in the context of partial differential equations.

%Furthermore, the authors believe that there is a connection between these type of oscillations and oscillations that typically arise in hyperbolic problems close to discontinuities. 
%Hence, we plan to study in the future the  TVD  property of MP time discretization for hyperbolic problems.  ??????????????????????????????????????????????? % label se_conclusion

\section*{Acknowledgements}
The author Th. Izgin gratefully acknowledges the financial support by the Deutsche Forschungsgemeinschaft (DFG) through grant ME 1889/10-1. P. \"Offner was supported by the Gutenberg Research College, JGU Mainz. D. Torlo (Sissa, Italy) was supported by a SISSA Mathematical Fellowship.

\bibliographystyle{siam}
\bibliography{literature}

\end{document}